\newtheorem{theorem}{Theorem}
\newtheorem{lemma}{Lemma}
\newtheorem{cor}{Corollary}
\theoremstyle{remark}
\newtheorem{remark}{Remark}
\DeclareMathOperator{\Aut}{Aut}
\DeclareMathOperator{\Stab}{Stab}
\newcommand{\N}{\mathbb{N}}
\title{Edge Kempe equivalence of regular graph covers}
\author{Nir Lazarovich and Arie Levit}
\begin{document}

\begin{abstract} 
Let $G$ be  a finite $d$-regular graph  with a proper  edge coloring. An edge Kempe switch is a new proper edge coloring of  $G$ obtained by switching the two colors along some  bi-chromatic cycle. We prove that any other edge coloring  can be obtained by performing finitely many edge Kempe switches, provided that $G$ is replaced with a suitable finite covering graph. The required covering degree is bounded above by a constant depending only on $d$.
\end{abstract}

\maketitle

Let $G$ be a finite $d$-regular graph\footnote{Throughout this text graphs are undirected and without loops.}. Let $c:E(G)\to \{1,\ldots,d\}$ be a \emph{proper edge-coloring}, i.e every two incident edges $e, e' \in E(G)$  satisfy $c(e) \neq c(e')$. 
Given a subset  $S \subset \{1,\ldots,d\}$   of colors, let $(G,c)[S]$ denote the spanning subgraph of $G$ containing all the edges $e \in E(G)$ with $c(e) \in S$.   We will often abuse notation and write  $(G,c)[i]$ and $(G,c)[i,j]$ for the subgraphs $(G,c)[\{i\}]$ and $(G,c)[\{i,j\}]$ respectively,  where $i,j\in \{1,\ldots,d\}$. 

Observe that for any two distinct colors $i$ and $j$ the subgraph $(G,c)[i,j]$  is a collection of disjoint cycles. Its connected components are the \emph{bi-chromatic cycles of $(G,c)$ of type $i,j$}.  

Let  $\gamma$ be a bi-chromatic cycle of type $i,j$ in $G$. Let $\gamma c$ denote the   coloring of $G$ obtained from $c$ by switching the two edge colors $i$ and $j$ along $\gamma$. More precisely  the coloring $\gamma c$ is given by
\[
\gamma  c(e) := 
\begin{cases}
c(e)&\text{ if }e\notin \gamma,\\
i &\text{ if }e\in \gamma\text{ and }c(e)=j, \\
j &\text{ if }e\in \gamma\text{ and }c(e)=i.
\end{cases}
\]
for every edge $e \in E(G)$. Clearly $\gamma c$ is  proper as well. We say that the colored graph $(G,\gamma c)$ is obtained from $(G,c)$ by an \emph{edge Kempe switch operation}. Kempe introduced a similar idea in his paper \cite{kempe1879geographical} on the four color problem.

Two proper edge colorings $c_1$ and $c_2$ are \emph{edge Kempe equivalent} if $(G,c_2)$ can be obtained from $(G,c_1)$ by a sequence of edge Kempe switches. We denote this by  $(G,c_1) \sim (G,c_2) $.

Recall that a graph homomorphism $p : \overline{G} \to G$ is a \emph{covering} if $p$ is surjective and  induces a bijection between the edges incident at $v$ and $p(v)$ for every $v \in V(\overline{G})$. We say that $\overline{G}$ is a \emph{graph cover} of $G$. Note that every graph cover of $G$ must be $d$-regular as well.  The \emph{covering degree} of $p$ is well-defined whenever $G$ is connected and is equal to   $|p^{-1}(v)|$ for some $v \in V(G)$. A graph cover is \emph{finite} if the set $p^{-1}(v)$   is finite for every $v \in V(G)$.
Given a proper edge coloring $c$ of $G$  let $c \circ p$ denote the \emph{pull-back coloring} of $\overline{G}$, that is $(c \circ p)(e) = c(p(e))$ for every $e \in E(\overline{G})$. 

\begin{theorem}\label{thm:main result}
	Let $G$ be a finite $d$-regular  graph admitting two proper edge colorings  $c_1$ and $c_2$.  Then there is a finite graph cover $p:\overline{G} \to G$ so that the two pull-back colorings $c_1 \circ p$ and $ c_2 \circ p$ of $\overline{G}$ are edge Kempe equivalent. Moreover the covering degree of $p$ is bounded from above by a constant depending only on $d$.
\end{theorem}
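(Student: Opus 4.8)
The plan is to reformulate edge colorings group-theoretically and then to track the difference between the two colorings by a single $\mathrm{Sym}_d$-valued ``gauge potential'' on which Kempe switches act transparently. A legal coloring of $G$ is the same as a decomposition of $E(G)$ into $d$ perfect matchings, i.e. an action of the free product $\Gamma=(\mathbb{Z}/2)^{*d}$ on $V(G)$ in which the $i$-th generator $s_i$ acts as the fixed-point-free involution matching each vertex to its color-$i$ neighbor; the tree $\mathrm{Cay}(\Gamma)$ is the common universal cover, the bi-chromatic cycles of type $i,j$ are exactly the orbits of the dihedral subgroup $\langle s_i,s_j\rangle$, and finite covers $p\colon\overline G\to G$ correspond to finite-index sub-actions. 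Fixing $c_1$ as a reference, I would encode $c_2$ by the potential $\tau\colon V(G)\to \mathrm{Sym}_d$ defined by $\tau_v(c_1(e))=c_2(e)$ for every edge $e$ at $v$; this is well defined because at each vertex both colorings biject the incident edges with $\{1,\dots,d\}$. The two colorings coincide exactly when $\tau\equiv\mathrm{id}$, so it suffices to drive $\tau$ to the identity by Kempe switches, possibly after pulling back to a cover (where $\tau$ pulls back to $\tau\circ p$).

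First I would compute the action of a single switch on $\tau$. A direct check shows that switching the bi-chromatic $(i,j)$-cycle $\gamma$ replaces $\tau_v$ by $(i\,j)\,\tau_v$ for every $v\in\gamma$ and leaves all other values unchanged. Since every bi-chromatic cycle has even length, the quantity
\[ \mathcal S(\tau)=\prod_{v\in V(G)}\mathrm{sgn}(\tau_v)\in\{\pm1\} \]
is invariant under switches, and it equals $+1$ when $c_1=c_2$; thus $\mathcal S$ is a genuine obstruction to Kempe equivalence on $G$ itself. The decisive point is its behaviour under covers: a connected degree-$n$ cover replaces $\mathcal S$ by $\mathcal S^{\,n}$, so passing to any connected cover of even degree forces $\mathcal S=+1$. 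This already explains why covering helps and suggests that the whole obstruction is of ``abelian/parity'' type and can be annihilated in degree bounded by $d$.

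The heart of the argument is then a cancellation lemma: once the parity obstruction vanishes, $\tau$ can be trivialized by switches after passing to a cover whose degree is bounded in terms of $d$ alone. I would prove this by reducing $\tau$ through the normal structure of $\mathrm{Sym}_d$: first annihilate its image in the abelianization $\mathrm{Sym}_d^{\mathrm{ab}}\cong\mathbb{Z}/2$, a purely homological $\mathbb{Z}/2$-statement in which the support of $\tau$ is written as a sum of vertex sets of bi-chromatic cycles, solvable precisely because $\mathcal S=+1$ and connected covers provide an abundance of cycles; then cancel the remaining values one transposition at a time, transporting a transposition along a bi-chromatic cycle to the vertex where it is needed and using a second cycle to absorb the collateral changes. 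Because each switch alters the coloring, the repertoire of available cycles changes as we proceed, so every elementary cancellation must be realized by a short, self-contained configuration that can be installed inside a fixed finite ``gadget'' cover.

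I expect the main obstacle to be exactly this last requirement: making the covering degree depend only on $d$ rather than on $|V(G)|$. A naive implementation transports transpositions along cycles whose lengths and number both grow with $G$, forcing the cover to grow as well. The saving observation is that $\tau$ takes values in the fixed finite group $\mathrm{Sym}_d$, so the data to be trivialized lives in a group depending only on $d$; the task is then to build a single colored gadget $C=C(d)$ and take $\overline G$ to be a $C$-fibered cover of $G$ in which each elementary cancellation can be carried out locally and independently, bounding the degree by $|C(d)|$. Verifying that these local moves are simultaneously legal and compose to the desired global recoloring—while the coloring underneath keeps changing—is where I anticipate the real difficulty.
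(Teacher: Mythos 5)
Your invariant is genuine and worth recording: with the convention that switching a bi-chromatic $(i,j)$-cycle of the coloring being modified multiplies $\tau_v$ by $(i\,j)$ (on the appropriate side) at every vertex of that cycle, the quantity $\mathcal S(\tau)=\prod_v \mathrm{sgn}(\tau_v)$ is preserved because bi-chromatic cycles have even length, and it is raised to the $n$-th power under a connected degree-$n$ cover. This recovers the known parity obstruction (essentially the one responsible for the inequivalent colorings of $K_{3,3}$ cited from \cite{haas2014counting}) and correctly explains why passing to covers can help. But the proof stops exactly where you say it does: your ``cancellation lemma'' \emph{is} the theorem, and the mechanism you sketch for it fails as described. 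A Kempe switch multiplies $\tau_v$ by the \emph{same} transposition at \emph{every} vertex of the chosen cycle, and the bi-chromatic cycles of a fixed type partition $V(G)$; so you cannot ``transport a transposition along a bi-chromatic cycle to the vertex where it is needed''---a single switch alters $\tau$ on the entire (even, unboundedly long) vertex set of the cycle, and the ``second cycle absorbing the collateral changes'' is never constructed. Moreover, each switch changes the cycle structure of every type globally, so there is no fixed set of elementary moves to analyze, and your guess that the obstruction is ``of abelian/parity type'' is unsupported: nothing in the proposal shows that $\mathcal S=+1$ suffices for equivalence even after a cover of degree bounded in $d$, and the gadget $C(d)$ on which everything hinges is not exhibited.

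For comparison, the paper sidesteps the simultaneous-legality problem you (rightly) anticipate by inducting on $d$ rather than trivializing $\tau$ all at once. It builds one explicit cover of degree $d-1$, twisted by a potential $\kappa:V(G)\to\mathbb{Z}/(d-1)\mathbb{Z}$---a cyclic-group cousin of your $\tau$, used to \emph{construct} the cover rather than to measure an obstruction---so that the symmetric difference of the two color-$d$ perfect matchings lifts to a disjoint union of bi-chromatic cycles; switching these aligns color $d$. The complementary $(d-1)$-regular subgraph is then handled by the induction hypothesis, applied twice, together with a lemma extending covers of subgraphs to covers of the ambient graph; this yields the recursion $\beta(d)=(d-1)\beta(d-1)^2$ for the covering degree. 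Any repair of your approach would need an explicit construction playing the role of that twisted cover; the rapid growth of $\beta$ already suggests that a single bounded local gadget, installed fiberwise, is unlikely to suffice.
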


In other words the theorem says that $(\overline{G}, c_1 \circ p) \sim (\overline{G}, c_2 \circ p)$ for some finite covering. This is false without passing to a finite cover  \cite[Lemma 4.4]{haas2014counting}. In general  the covering $\overline{G}$ may depend on the colorings $c_1$ and $c_2$. The upper bound on the degree of $p$ can be computed explicitly as a function of $d$.

\begin{remark}
Theorem \ref{thm:main result} can be seen as a generalization of the well-known fact that a symmetric group is generated by transpositions. To see this, let $G$ be the unique $d$-regular graph with two vertices and no loops. Then edge Kempe switches are simply transpositions of edges, and Theorem \ref{thm:main result} holds true without the need to pass to a cover.
\end{remark}

Let $T$ be the $d$-regular tree and   $\Aut(T)$ be its automorphism group. Let $\Gamma\le \Aut(T)$ be a uniform lattice and $C$ be the commensurator of   $\Gamma$ in the group $\Aut(T)$. For the definitions see \cite[\S 1.2, \S 6.1]{bass2001tree}. Fix an arbitrary vertex $v \in T$ and  let  $C_v = \Stab_C(v) \le C$ be the stabilizer subgroup of the vertex $v$. An element of the group $C_v$ can be interpreted as a finite $d$-regular graph with a given base point and with two proper edge colorings, see   \cite[Proposition 2.3]{lubotzky1994superrigidity}. 
This description is unique up to passing to finite covers. 
We thus obtain the following 

\begin{cor}
Edge Kempe switches are a generating set for the group $C_v$.
\end{cor}
We point out that the group $C_v$ is known to be infinitely generated, using arguments as in    \cite[Corollary B]{Bartholdi2010comm}. The above corollary provides a non-trivial such generating set.


\subsection*{Graph coverings}


 We begin with two elementary lemmas on graph coverings and colorings. In what follows, let $G$ be a finite $d$-regular graph with two proper edge colorings $c_1$ and $c_2$.

\begin{lemma}
\label{lemma:Kempe equivalence preserved under covers}
Let $p: \overline{G} \to G$ be a finite graph covering. If   $(G,c_1)\sim (G,c_2)$ then $(\overline{G}, c_1 \circ p) \sim (\overline{G}, c_2 \circ p)$.
\end{lemma}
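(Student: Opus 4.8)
The plan is to show that each individual edge Kempe switch on $G$ lifts to a finite sequence of edge Kempe switches on $\overline{G}$, and then to chain these lifts together by induction. Since $(G,c_1) \sim (G,c_2)$, by definition there is a finite sequence of bi-chromatic cycles $\gamma_1, \ldots, \gamma_n$ such that, setting $d_0 = c_1$ and $d_t = \gamma_t d_{t-1}$ with $\gamma_t$ a bi-chromatic cycle of $(G, d_{t-1})$, we have $d_n = c_2$. By induction on $n$ it suffices to treat a single switch: given a legal coloring $c$ of $G$ and a bi-chromatic cycle $\gamma$ of type $i,j$ in $(G,c)$, I will produce a sequence of edge Kempe switches on $\overline{G}$ realizing $(\overline{G}, c \circ p) \sim (\overline{G}, (\gamma c) \circ p)$.

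First I would analyze the preimage $p^{-1}(\gamma)$. Because $p$ is a covering and the pull-back satisfies $(c\circ p)(e) = c(p(e))$, the two colorings are compatible with $p$, so $p$ restricts to a covering map from $(\overline{G}, c\circ p)[i,j]$ onto $(G,c)[i,j]$. The latter subgraph is a disjoint union of cycles, of which $\gamma$ is one connected component. Hence $p$ restricted over $\gamma$ exhibits $p^{-1}(\gamma)$ as a finite cover of the single cycle $\gamma$, and a finite cover of a cycle is a disjoint union of finitely many cycles. Thus $p^{-1}(\gamma) = \overline{\gamma}_1 \sqcup \cdots \sqcup \overline{\gamma}_k$, where each $\overline{\gamma}_m$ is a bi-chromatic cycle of type $i,j$ in $(\overline{G}, c\circ p)$.

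Next I would perform the switches along $\overline{\gamma}_1, \ldots, \overline{\gamma}_k$ one at a time. The key point is that distinct connected components of the two-color subgraph are vertex-disjoint, so switching along $\overline{\gamma}_m$ leaves every other $\overline{\gamma}_{m'}$ untouched and still a valid bi-chromatic cycle of type $i,j$; each step is therefore a legitimate edge Kempe switch and each intermediate coloring remains legal. After all $k$ switches the colors $i$ and $j$ have been exchanged precisely on the edges of $p^{-1}(\gamma)$ and nowhere else, which is exactly $(\gamma c)\circ p$: for an edge $e \in E(\overline{G})$ one has $e \in p^{-1}(\gamma)$ if and only if $p(e) \in \gamma$, and the recoloring rule commutes with $p$. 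This yields $(\overline{G}, c\circ p) \sim (\overline{G}, (\gamma c)\circ p)$, completing the inductive step and hence the proof.

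The main obstacle, though a mild one, is verifying that $p$ genuinely restricts to a covering of the two-color subgraphs and that the preimage of a single cycle is again a disjoint union of cycles. This rests on the local bijectivity of $p$ at each vertex together with the color-compatibility of the pull-back, and on finiteness of the cover to guarantee that the covering degree over $\gamma$ is finite so that $p^{-1}(\gamma)$ breaks into finitely many cycles. Everything else is routine bookkeeping confirming that the sequential switches reproduce the pull-back of $\gamma c$.
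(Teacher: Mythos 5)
Your proof is correct and follows essentially the same route as the paper: reduce to a single switch along a bi-chromatic cycle $\gamma$, observe that $p^{-1}(\gamma)$ decomposes into finitely many disjoint bi-chromatic cycles of $(\overline{G}, c\circ p)$, and perform the switches along these lifts in sequence to obtain $(\gamma c)\circ p$. Your write-up merely makes explicit a few details the paper leaves implicit, such as the vertex-disjointness of the lifted cycles and the fact that a finite cover of a cycle is a disjoint union of cycles.
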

\begin{proof}
It  suffices to deal with the case where $c_2 = \gamma c_1$ for some $c_1$-bi-chromatic cycle $\gamma$ of type $i,j$. Note that $p^{-1}(\gamma)$ is a disjoint union of the $(c_1 \circ p)$-bi-chromatic cycles $\gamma_1, \ldots, \gamma_m$ in $\overline{G}$ for some $m \in \mathbb{N}$.   We conclude by observing that 
$$ (\overline{G},\gamma_1(\ldots \gamma_m (c_1\circ p)))=(\overline{G},(\gamma c_1)\circ p)=(\overline{G},c_2\circ p).$$
\end{proof}


\begin{lemma}
	\label{lemma:can extend covers}
	Let $H\subseteq G$ be a subgraph and $p:H'\to H$ be a finite covering map. Then there exist finite coverings $q:\overline{H}\to H'$ and $r:\overline{G} \to G$ so that $\overline{H}$ is a subgraph of $\overline{G}$ and 
	$ r_{|\overline{H}} = p \circ q $.
\end{lemma}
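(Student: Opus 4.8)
The plan is to describe finite coverings by \emph{permutation voltage assignments} and to build $\overline{G}$ by extending such an assignment for $\overline{H}$ from the edges of $H$ to all of $G$. Recall that a finite covering of a graph $K$ of uniform degree $N$ can be encoded as follows: fix an orientation of each edge and identify every fiber with the set $\{1,\dots,N\}$; then each oriented edge $e = (u,v)$ of $K$ determines a permutation $\sigma_e \in S_N$ (with $\sigma_{\bar e} = \sigma_e^{-1}$), and the covering graph is recovered on the vertex set $V(K) \times \{1,\dots,N\}$ by joining $(u,i)$ to $(v,\sigma_e(i))$ for each edge $e$ and each index $i$. Conversely, any such family $\{\sigma_e\}$ defines a covering of $K$.

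First I would arrange that the covering to $H$ has uniform degree. Let $H_1,\dots,H_k$ be the connected components of $H$, let $n_\ell$ be the degree of $p$ over $H_\ell$, and set $N = \operatorname{lcm}(n_1,\dots,n_k)$. Define $q : \overline{H} \to H'$ to be the covering which over each component of $H'$ lying above $H_\ell$ is the trivial covering consisting of $N/n_\ell$ disjoint copies. Then $\overline{H}$ is a finite covering of $H'$, and the composition $p \circ q : \overline{H} \to H$ is a covering of uniform degree $N$ over every component of $H$. This step is precisely what $q$ is for: when $G$ is connected, the preimage of every component of $H$ in any covering of $G$ has one and the same degree, so we must first replace $H'$ by an equidegreed covering of $H$.

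Now I would encode $p \circ q$ by a voltage assignment. Choosing identifications of the fibers of $\overline{H} \to H$ with $\{1,\dots,N\}$ yields permutations $\{\sigma_e\}_{e \in E(H)}$ that reconstruct $\overline{H}$ as above. Extend this family to all of $E(G)$ by declaring $\sigma_e = \mathrm{id}$ for every $e \in E(G)\setminus E(H)$, and let $r : \overline{G} \to G$ be the resulting degree-$N$ covering on $V(G) \times \{1,\dots,N\}$. Since the restriction of this assignment to $E(H)$ is unchanged, the preimage $r^{-1}(H)$ is exactly the covering of $H$ determined by $\{\sigma_e\}_{e\in E(H)}$, namely $\overline{H}$. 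Regarding $\overline{H} = r^{-1}(H)$ as a subgraph of $\overline{G}$, we obtain $r_{|\overline{H}} = p \circ q$, as required.

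I expect the only genuine subtlety to be the equidegree issue handled by $q$, together with the routine bookkeeping of fiber identifications. Once coverings are described by voltages, extending the assignment by the identity over the edges outside $H$ is immediate, and checking that $r^{-1}(H)$ coincides on the nose with $\overline{H}$ is a direct verification.
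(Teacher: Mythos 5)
Your proof is correct and follows essentially the same route as the paper: first compose with a covering $q$ to make the degree over $H$ uniform (your $\operatorname{lcm}$ construction makes explicit the paper's choice of $q$ with $|(p\circ q)^{-1}(v)|$ constant), then extend trivially over $E(G)\setminus E(H)$. Your permutation-voltage bookkeeping with $\sigma_e = \mathrm{id}$ outside $H$ is just a formalization of the paper's step of adding $m$ vertices and $m$ edges for each vertex and edge of $G$ missing from $H$.
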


\begin{proof} 
The degree of $p$ is well defined on every connected component of $H$. By standard covering theory \cite{hatcher2005algebraic}, any finite connected graph admits a finite covering of any degree. Applying this fact, one can construct a graph covering  $q : \overline{H} \to H'$ so that  $|(p \circ q)^{-1}(v)| = m\in \N$ for all $v\in V(H)$.  For instance $m$ can be taken to be the least common multiple of the degrees of $p$ on connected components of $H$. 

It is clear that $p\circ q : \overline{H} \to H$ can be extended to a graph covering $r : \overline{G} \to G$ by adding $m$ additional vertices and $m$ additional edges for every vertex in $V(G) \setminus V(H)$ and every edge in $E(G) \setminus E(H)$, respectively. In particular $ r_{|\overline{H}} = p \circ q $ is satisfied.
\end{proof}

\begin{remark}
\label{rem:about using lemma}
If $|p^{-1}(v)|$ is independent of $v \in V(H)$ in the previous lemma then $q$ is the identity map and the covering degree of $r$ is the same as  $|p^{-1}(v)|$ for some $v \in V(H)$. This will always be the case when we apply Lemma \ref{lemma:can extend covers} below.
\end{remark}

\subsection*{The main argument}

For every natural number $d$  let $\mathcal{K}_d$ denote the statement
$$ \mathcal{K}_d: \emph{\text{Theorem \ref{thm:main result} holds for all finite $d$-regular graphs}} $$
Clearly $\mathcal{K}_1$ and $\mathcal{K}_2$ are true.
The  strategy is to proceed by induction  assuming $\mathcal{K}_{d-1}$ to show $\mathcal{K}_{d}$. Interestingly $\mathcal{K}_{d-1}$ plays a twofold role --- first in aligning one color correctly and second in dealing with all of the remaining colors, see Corollary \ref{cor:cover aligning one color} and Lemma \ref{prop: end of proof} respectively. The proof moreover relies on a certain explicit cover of $G$ constructed below in Lemma \ref{lemma: cool cover}. Our strategy in illustrated in an example at the end of this paper, see Figure \ref{fig:example}.

We show how the induction assumption $\mathcal{K}_{d-1}$ is used to complete the proof of Theorem \ref{thm:main result} given that one color has been correctly aligned.

\begin{lemma}
	\label{prop: end of proof}	
	Assume $\mathcal{K}_{d-1}$. If   $(G, c_1 )[d] = (G, c_2)[d]$ then there is a finite covering $p:\overline{G}\to G$ such that $(\overline{G},c_1\circ p)\sim (\overline{G},c_2\circ p)$.
\end{lemma}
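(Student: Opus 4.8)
The plan is to strip off the common color-$d$ matching, apply the induction hypothesis $\mathcal{K}_{d-1}$ to the resulting $(d-1)$-regular graph, and then re-attach color $d$ on a suitable cover of $G$. First I would set $M = (G,c_1)[d] = (G,c_2)[d]$. Since $c_1$ is legal and $G$ is $d$-regular with $d$ colors, each vertex is incident to exactly one edge of each color, so $M$ is a perfect matching and the spanning subgraph $H = (G,c_1)[1,\ldots,d-1]$ obtained by deleting $M$ is $(d-1)$-regular. The restrictions $c_1|_H$ and $c_2|_H$ are legal edge colorings of $H$ using only the colors $\{1,\ldots,d-1\}$. Invoking $\mathcal{K}_{d-1}$, I obtain a finite covering $p':H'\to H$ for which $c_1|_H\circ p' \sim c_2|_H\circ p'$ on $H'$.

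Next I would promote this cover of the subgraph $H$ to a cover of all of $G$. Applying Lemma \ref{lemma:can extend covers} to $H \subseteq G$ and $p':H'\to H$ produces finite coverings $q:\overline{H}\to H'$ and $r:\overline{G}\to G$ with $\overline{H}$ a subgraph of $\overline{G}$ and $r|_{\overline{H}} = p'\circ q$. Writing $\pi = p'\circ q$, Lemma \ref{lemma:Kempe equivalence preserved under covers} applied to the cover $q$ transports the equivalence on $H'$ to $\overline{H}$, giving $c_1|_H\circ \pi \sim c_2|_H\circ \pi$. Equivalently, since $r|_{\overline{H}}=\pi$, the restrictions $(c_1\circ r)|_{\overline{H}}$ and $(c_2\circ r)|_{\overline{H}}$ are edge Kempe equivalent as colorings of $\overline{H}$, via a sequence of switches in the colors $\{1,\ldots,d-1\}$.

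The final and most delicate step is to realize this sequence of switches inside $\overline{G}$ and to check that it lands exactly on $c_2 \circ r$. The key point is that, by the construction of $r$ in Lemma \ref{lemma:can extend covers}, every edge of $\overline{G}$ lying outside $\overline{H}$ belongs to $r^{-1}(M)$ and is therefore colored $d$ by both $c_1\circ r$ and $c_2\circ r$. Consequently, for any two colors $i,j\in\{1,\ldots,d-1\}$ the bi-chromatic subgraph $(\overline{G},c_1\circ r)[i,j]$ coincides with $(\overline{H},(c_1\circ r)|_{\overline{H}})[i,j]$, so a bi-chromatic cycle of $\overline{H}$ in these colors is a genuine bi-chromatic cycle of $\overline{G}$. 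Hence the switches produced above may be performed verbatim in $\overline{G}$, and since none of them involves color $d$, the edges of $r^{-1}(M)$ are never touched and this identity of bi-chromatic subgraphs persists at every intermediate stage. After performing all the switches, $c_1\circ r$ has been turned into a coloring that agrees with $(c_2\circ r)|_{\overline{H}}$ on $\overline{H}$ and still colors every edge of $r^{-1}(M)$ by $d$; because $(G,c_2)[d]=M$ this is exactly $c_2\circ r$. Taking $p = r$ then completes the proof.

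The main obstacle is precisely this decoupling. One must guarantee that the cover $r$ extends $p'$ by adjoining \emph{only} color-$d$ edges, so that the inductive switches supplied on $\overline{H}$ neither create nor destroy color-$d$ edges and the two pulled-back colorings coincide on the nose rather than merely on $\overline{H}$. This is exactly what the explicit extension in Lemma \ref{lemma:can extend covers} provides, and it is the reason the hypothesis $(G,c_1)[d]=(G,c_2)[d]$ is used.
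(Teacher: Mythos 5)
Your proof is correct and follows essentially the same route as the paper's: delete the common color-$d$ perfect matching, apply $\mathcal{K}_{d-1}$ to the $(d-1)$-regular subgraph $H$, extend the resulting cover via Lemma \ref{lemma:can extend covers}, and transport the equivalence with Lemma \ref{lemma:Kempe equivalence preserved under covers}. Your final paragraph merely spells out in detail what the paper asserts in one sentence (``the same sequence of edge Kempe switches on $\overline{H}$ can be regarded as being performed on $\overline{G}$''), correctly observing that all edges of $\overline{G}$ outside $\overline{H}$ lie over $M$ and carry color $d$ under both pull-backs, so bi-chromatic cycles in colors $i,j \neq d$ are unaffected.
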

\begin{proof}
	Consider the spanning subgraph 
	$$H=(G, c_1 )[\{1,\ldots,d-1\}]=(G, c_2 )[\{1,\ldots,d-1\}]$$
	of $G$ consisting of those edges $e \in E(G)$ with $c_1(e) \neq d$ (or equivalently $c_2(e) \neq d$).  The graph $H$ is a finite $(d-1)$-regular graph admitting two proper colorings $c_1|_H$ and $c_2|_H$. According to $\mathcal{K}_{d-1}$ there is a finite graph cover $p:H'\to H$ in which $(H',c_1\circ p)\sim (H',c_2 \circ p)$. By Lemma \ref{lemma:can extend covers}  there exist finite coverings $r:\overline{G}\to G$ and  $q:\overline{H} \to H'$ so that $\overline{H}$ is a subgraph of $\overline{G}$ and $r_{|\overline{H}} = p \circ q$. Note that $(\overline{H},c_1\circ p\circ q)\sim (\overline{H},c_2\circ p\circ q)$ by Lemma \ref{lemma:Kempe equivalence preserved under covers}. 
	The same sequence of edge Kempe switches on $\overline{H}$ can be regarded as being performed on $\overline{G}$ implying that $(\overline{G},c_1\circ r)\sim(\overline{G},c_2\circ r)$.
\end{proof}

 We  describe an explicit construction of a special  cover making it possible to align one particular color, say  $d$.   Denote
$$G_\text{s} = (G,c_1)[d] \cap (G,c_2)[d] $$
The subgraph $G_\text{s}$ is a disjoint union of single vertices and copies of $K_2$ (that is, graphs consisting of two vertices connected by a single  edge).
Denote 
$$ G_\text{r} = \left( ( G,c_1)[d] \cup (G,c_2)[d]  \right) \setminus E (G_\text{s}) $$
The subgraph $G_\text{r}$ is a disjoint union of single vertices and of cycles.
Every vertex $v \in V(G)$ is incident  to either a single edge of $E(G_\text{s})$ or  exactly two edges of  $ E(G_\text{r})$.


\begin{lemma}\label{lemma: cool cover}
	There is a graph covering $p : \overline{G} \to G$ and a coloring $\overline{c}$ of $\overline{G}$ such that $(\overline{G},c_1 \circ p)[d] = (\overline{G},\overline{c})[d]$  and $p^{-1}(G_\text{r})$ is a disjoint union of $\overline{c}$-bi-chromatic cycles.
\end{lemma}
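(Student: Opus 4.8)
The plan is to reformulate the desired conclusion as a statement about properly $(d-1)$-coloring a cover. Write $M_i = (G,c_i)[d]$ for the two color-$d$ perfect matchings, so that $G_\text{s} = M_1 \cap M_2$ and $G_\text{r} = M_1 \triangle M_2$ is a disjoint union of even cycles alternating between $M_1$- and $M_2$-edges. Since $\overline c$ is required to color exactly $p^{-1}(M_1)$ with $d$, its restriction to the complementary $(d-1)$-regular graph $p^{-1}(H)$, where $H=(G,c_1)[1,\ldots,d-1]$, is an arbitrary legal $(d-1)$-coloring, and conversely any such coloring extends to an admissible $\overline c$ by painting $p^{-1}(M_1)$ with $d$. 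I would first observe that, once $p^{-1}(M_1)$ is colored $d$, a component $\widetilde C$ of $p^{-1}(G_\text{r})$ is $\overline c$-bi-chromatic if and only if the $p^{-1}(M_2)$-edges along $\widetilde C$ all receive one and the same color $k \neq d$: indeed the color-$d$ and color-$k$ edges at each vertex of $\widetilde C$ are then exactly its two $\widetilde C$-edges, so the $\{d,k\}$-component through that vertex closes up inside $\widetilde C$. Thus the lemma reduces to producing a finite cover of $H$ together with a legal $(d-1)$-coloring whose restriction to the lifts of the matching $N := M_2 \setminus M_1$ is constant along each lifted $G_\text{r}$-cycle (the color $k$ being allowed to vary from one lifted cycle to another).

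Next I would construct this colored cover and then transport it back to $G$. Working over $H$, I would look for a legal $(d-1)$-coloring $c^\star$ of a finite cover $q : H' \to H$ in which every lifted $G_\text{r}$-cycle has monochromatic $N$-lifts; having such a $c^\star$ suffices, as one can set $\overline c$ to equal $c^\star$ off $p^{-1}(M_1)$ and $d$ on it. To pass from $H'$ back to a cover of $G$ I would invoke Lemma \ref{lemma:can extend covers} together with Remark \ref{rem:about using lemma}, extending $q$ to a cover $r : \overline G \to G$ whose restriction over $H$ is $q$; the matching $M_1$ and the coloring are then carried along unchanged. The substance is therefore the construction of $c^\star$. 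The naive attempt is to choose, for each $G_\text{r}$-cycle, a target second color and recolor its $N$-edges to it; each such recoloring is a Kempe-type adjustment along $\{b,k\}$-chains, where $b$ is the current $c_1$-color of the $N$-edge, and in $H$ these chains may collide, both with one another and with the cycles themselves. The purpose of the cover is exactly to separate these collisions: after lifting, the several demands placed on a given vertex by nearby cycles can be met by letting different lifts of the same $G_\text{r}$-cycle carry different second colors.

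The key point to verify is global consistency, and this is where I expect the main obstacle to lie. A vertex $w$ of $H$ all of whose neighbours lie on a single cycle $C$ cannot acquire its required color-$k$ edge if every vertex of $C$ already spends color $k$ on its $N$-edge, so no single second color can work simultaneously for $C$ and for $w$ \emph{in $G$ itself}. Passing to a cover resolves this precisely because the lift of $w$ may send its color-$k$ edge to a lift of $C$ that uses a second color $k' \neq k$, freeing that vertex's color-$k$ slot for the edge to $w$. Formalizing this --- producing a single finite cover on which one coherent assignment of second colors to all lifted $G_\text{r}$-cycles extends to a global legal $(d-1)$-coloring --- is the heart of the matter. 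I expect it to be handled by a permutation (voltage) cover whose monodromy around each $G_\text{r}$-cycle is prescribed by the sequence of $c_1$-colors of its $N$-edges, with voltages drawn from a fixed finite repertoire depending only on $d$; this both absorbs an arbitrary color pattern around a long cycle into finitely many sheets and makes the covering degree bounded in terms of $d$ alone.
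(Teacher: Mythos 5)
Your reduction is sound as far as it goes, and your diagnosis is accurate: you correctly observe that $\overline{c}$ must equal $d$ exactly on $p^{-1}(M_1)$, that a component of $p^{-1}(G_\text{r})$ is $\overline{c}$-bi-chromatic precisely when its lifted $N$-edges all carry one common color $k\neq d$, and you even name the right tool --- a voltage-type cover whose sheet shifts are read off from the $c_1$-colors of the $N$-edges. This is in fact exactly what the paper does: it takes sheets indexed by $C=\mathbb{Z}/(d-1)\mathbb{Z}$, sets $\kappa(v)$ to be the $c_1$-color of the $c_2$-matching edge at $v$ (and $0$ on $G_\text{s}$-vertices), uses the voltage $\delta(e)=\kappa(e_+)-\kappa(e_-)$ on edges with $c_1(e)\neq d$ and $\delta(e)=0$ otherwise, and colors by $\overline{c}(e,i)=i-\kappa(e_-)+c_1(e)$. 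The point of this normalization is that legality is automatic (at each vertex of sheet-coordinate shifted by $\kappa$ the non-$d$ colors form a coherent translate of $C$), that $\delta$ vanishes identically on $G_\text{r}$ so each cycle $\gamma\subset G_\text{r}$ lifts to $d-1$ disjoint copies $\gamma\times\{i\}$, and that $\overline{c}(e,i)=i$ on the $N$-edges of $G_\text{r}$, making $\gamma\times\{i\}$ bi-chromatic of type $i,d$. But your proposal stops at ``I expect it to be handled by a permutation (voltage) cover \ldots with voltages drawn from a fixed finite repertoire'': the existence of a finite cover carrying a coherent second-color assignment that extends to a global legal $(d-1)$-coloring is precisely the assertion of the lemma, and you give no candidate voltages, no coloring formula, and no verification of legality or monochromaticity. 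The heart of the argument is therefore missing, not merely compressed.

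There is also a secondary flaw in the intermediate reduction. You propose to work over $H=(G,c_1)[1,\ldots,d-1]$ and then extend the cover $q:H'\to H$ to $\overline{G}\to G$ via Lemma \ref{lemma:can extend covers}; but the $M_1$-edges of a $G_\text{r}$-cycle lie \emph{outside} $H$, and the extension in that lemma matches their lifts by an arbitrary bijection. Consequently a ``lifted $G_\text{r}$-cycle'' is not defined over $H'$ at all: the cycle structure of $r^{-1}(\gamma)$ only materializes after the (uncontrolled) choice of extension, and a component may splice together lifts of the $N$-edges of $\gamma$ that you intended to carry \emph{different} second colors, destroying bi-chromaticity. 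This is repairable --- either demand the stronger condition that \emph{all} lifts of $N\cap\gamma$ over a given base cycle $\gamma$ receive one common color $k(\gamma)$ (then any extension works), or build the cover directly over $G$ with zero voltage on the $M_1$-edges, as the paper does, so that lifted cycles stay within a single sheet --- but as written your reduction does not literally make sense, and fixing it by the second route essentially forces the paper's construction.
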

\begin{proof}
	Let $C = \{1, \ldots, d-1\}$ be the set of  colors excluding the color $d$. For the purpose of the proof we identify  $C$   with the additive group $\sfrac{\mathbb{Z}} {(d-1)\mathbb{Z}}$ of the integers modulo $d-1$. The graph cover $\overline{G}$ is constructed by letting
	$$V(\overline{G}) = V(G) \times C \quad \text{and} \quad E(\overline{G}) = E(G) \times C.$$
	The  map $p : \overline{G} \to G$ is simply the projection to the first coordinate. 
	
	There is a well-defined map $\rho : V(G ) \to E(G )$
	so that $\rho(v)$ is the unique edge of  $(G,c_2)[d]$ incident at the vertex $v$.
	Denote 
	$$ \kappa : V(G) \to C, \quad \kappa : v \mapsto \begin{cases}  c_1(\rho(v)), & \text{$v$ is incident to an edge of $G_\text{r}$} \\ 0  , & \text{$v$ is incident to an edge of $G_\text{s}$} \end{cases}. $$
	Fix an arbitrary orientation for the edges of $G$.  For every edge $e \in E(G)  $ let $e_+,e_- \in V(G)$ denote the origin and terminus vertices of $e$ with respect to this orientation, respectively. 
	%
	%
	Denote
	$$ \delta : E(G)  \to C, \quad \delta(e) = \begin{cases} \kappa( e_+ ) - \kappa( e_- ), & c_1(e) \neq d, \\ 0, & c_1(e) = d \end{cases} .$$
	%
	
	The edge structure of $\overline{G}$ is  as follows ---
	 the edge $\overline{e} = (e,i) \in E(\overline{G})$   is incident at the two vertices 
	$$ \overline{e}_- = (e,i)_- = (e_-,i), \quad \overline{e}_+ = (e,i)_+ = (e_+, i + \delta(e)) $$
	The definition of $\overline{G}$ is independent of the chosen orientation on $E(G)$. Indeed, given an edge $e \in E(G)$ let $e'$ denote the same edge regarded with the reverse orientation, so that $e_- = e'_+$ and $e_+ = e'_-$. In any case $\delta(e) = -\delta(e')$ which implies that if we set $\overline{e}' = (e',i+\delta(e))$ then $\overline{e}_- = \overline{e}'_+$ and $\overline{e}_+ = \overline{e}'_-$. 
	

	We are ready to define the coloring $\overline{c}$ on $\overline{G}$. It is given by
	$$ \overline{c}(e,i) = \begin{cases}
	d, & c_1(e) = d, \\
	i -\kappa(e_-) + c_1(e)  , &  \text{otherwise}
	\end{cases}$$
	for every edge $\overline{e} = (e,i) \in E(\overline{G})$. As in the above definition of  $E(\overline{G})$ it is easy to verify that the coloring $\overline{c}$ is independent of the choice of  orientation on $E(G)$.
	
	Consider a vertex $\overline{v} = (v,i) \in V(\overline{G})$. We may assume without loss of generality that every edge $e \in E(G)$ incident at $v$ is oriented so that $e_- = v$. Therefore an edge $\overline{e} = (e,i) \in E(\overline{G})$ is incident at $\overline{v}$ if and only if $e$ is incident at $v$. This shows that $p$ is a covering and in particular that $\overline{G}$ is $d$-regular. The edge $\overline{e} = (e,i)$ with $c_1(e) = d$ is the unique edge incident at $\overline{v}$ with $\overline{c}(\overline{e})= d$. All other edges $\overline{e} = (e,i)$ with $e$ incident at $v$ and $c_1(e) \neq d$ have the value of $i - \kappa(e_-) \in C$ in common. It follows that $\overline{c}$ is a proper coloring. Moreover, the condition $(\overline{G},c_1 \circ p)[d] = (\overline{G},\overline{c})[d]$  clearly holds.
	
	%

	We claim that $\delta(e) = 0$ holds for every $e \in E(G_\text{r})$. This follows by definition whenever $c_1(e) = d$. In the remaining case  $c_1(e) \neq d$ and $c_2(e) = d$. This implies  $e = \rho(e_+) = \rho(e_-)$ so that $c_1(e) = \kappa(e_+) = \kappa(e_-)$ and $\delta(e) = 0$ as well. In addition note that $\overline{c}(e,i) = i$ for every $e \in E(G_\text{r})$ with $c_1(e) \neq d$.
	
	Let $\gamma$ be any cycle in $G_\text{r}$. The previous claim implies that $p^{-1}(\gamma)$ is a disjoint union of the  $\overline{c}$-bi-chromatic  cycles $\gamma \times \{i\}$ of type $i,d$  for every $i \in C$. \end{proof}

\begin{remark}
	The need to choose orientation in the proof above can be removed by using Serre's formalism for graphs \cite{serre1980trees}.
\end{remark}

\begin{cor}
\label{cor:cover aligning one color}
 There is a finite graph covering $p : \overline{G} \to G$ and a proper coloring $\overline{c}'$ of $\overline{G}$ so that $(\overline{G},c_1 \circ p) \sim (\overline{G}, \overline{c}')$ and $(\overline{G},\overline{c}')[d] = (\overline{G},c_2 \circ p)[d]$.
\end{cor}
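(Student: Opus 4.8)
The plan is to feed the explicit cover of Lemma~\ref{lemma: cool cover} into the inductive hypothesis $\mathcal{K}_{d-1}$, which enters here in the first of its two advertised roles. First I would apply Lemma~\ref{lemma: cool cover} to obtain a finite covering $p:\overline{G}\to G$ and a legal coloring $\overline{c}$ of $\overline{G}$ with $(\overline{G},c_1\circ p)[d]=(\overline{G},\overline{c})[d]$ and such that $p^{-1}(G_\text{r})$ is a disjoint union of $\overline{c}$-bi-chromatic cycles. The coloring $\overline{c}$ already records the color-$d$ structure of $c_1\circ p$, but it need not agree with $c_1\circ p$ on the remaining colors, and reconciling this discrepancy is exactly where the induction is spent.

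Since $(\overline{G},c_1\circ p)[d]=(\overline{G},\overline{c})[d]$, the two colorings $c_1\circ p$ and $\overline{c}$ of $\overline{G}$ place the color $d$ on the same edges. I would therefore apply Lemma~\ref{prop: end of proof} to $\overline{G}$ equipped with this pair of colorings; this is legitimate under $\mathcal{K}_{d-1}$ and yields a further finite covering $q:\widehat{G}\to\overline{G}$ on which the pulled-back colorings become edge Kempe equivalent, that is $(\widehat{G},c_1\circ p\circ q)\sim(\widehat{G},\overline{c}\circ q)$.

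It remains to displace the color-$d$ edges of $\overline{c}$ to the positions prescribed by $c_2$, and this is achieved by switching along the bi-chromatic cycles produced by Lemma~\ref{lemma: cool cover}. Recall that $p^{-1}(G_\text{r})$ is the union of the cycles $\gamma\times\{i\}$ of type $i,d$, with $\gamma$ ranging over the cycles of $G_\text{r}$ and $i\in C$; these form a disjoint union by Lemma~\ref{lemma: cool cover}, so switching along all of them at once is unambiguous. Using the identity $\overline{c}(e,i)=i$ for $e\in E(G_\text{r})$ with $c_1(e)\neq d$ from the proof of Lemma~\ref{lemma: cool cover}, along each such cycle the color-$d$ edges are precisely the lifts of edges with $c_1=d$ while the color-$i$ edges are precisely the lifts of edges with $c_2=d$. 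Performing the Kempe switch of type $i,d$ on each cycle interchanges these two families, and leaves the edges of $G_\text{s}$ (colored $d$ by both $c_1$ and $c_2$) together with all edges outside $p^{-1}(G_\text{s}\cup G_\text{r})$ (never colored $d$) untouched. Pulling this finite sequence of switches back to $\widehat{G}$ through Lemma~\ref{lemma:Kempe equivalence preserved under covers} produces a legal coloring $\overline{c}'$ with $(\widehat{G},\overline{c}\circ q)\sim(\widehat{G},\overline{c}')$ and $(\widehat{G},\overline{c}')[d]=(\widehat{G},c_2\circ p\circ q)[d]$.

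Chaining the two equivalences gives $(\widehat{G},c_1\circ p\circ q)\sim(\widehat{G},\overline{c}')$, and taking the composite $p\circ q$ (a finite covering) as the covering map finishes the argument. The main obstacle is conceptual rather than computational: one must recognize that Lemma~\ref{lemma: cool cover} aligns color $d$ only up to the mismatch in the other colors, so the inductive hypothesis has to be invoked once, through Lemma~\ref{prop: end of proof}, to match $c_1\circ p$ with $\overline{c}$ before the prepared bi-chromatic cycles can be used to drag color $d$ onto its $c_2$-target. The only genuine verification is that, on each $\gamma\times\{i\}$, the switch installs exactly the $c_2$-pattern of color $d$, which rests on the identities $\delta(e)=0$ and $\overline{c}(e,i)=i$ on $E(G_\text{r})$ recorded in Lemma~\ref{lemma: cool cover}.
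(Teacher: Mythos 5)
Your proof is correct, but it takes a genuinely different (and in fact more careful) route than the paper's. The paper's own proof of Corollary \ref{cor:cover aligning one color} stays on the single cover $\overline{G}$ of Lemma \ref{lemma: cool cover}: it performs the Kempe switches along $p^{-1}(G_\text{r})$ to pass from $\overline{c}$ to $\overline{c}'$ and then asserts $(\overline{G},c_1\circ p)\sim(\overline{G},\overline{c}')$ --- an assertion which, read literally, is not justified at that point, since Lemma \ref{lemma: cool cover} only guarantees $(\overline{G},c_1\circ p)[d]=(\overline{G},\overline{c})[d]$, not $(\overline{G},c_1\circ p)\sim(\overline{G},\overline{c})$. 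The paper absorbs this mismatch later: the proof of Theorem \ref{thm:main result} invokes Lemmas \ref{lemma:Kempe equivalence preserved under covers} and \ref{prop: end of proof} \emph{twice} (once to align $c_1\circ p$ with $\overline{c}$, once to align $\overline{c}'$ with $c_2\circ p$), which is exactly what produces the factor $\beta(d-1)^2$ in the degree recursion. You instead noticed this gap and repaired it inside the corollary, spending one application of $\mathcal{K}_{d-1}$ through Lemma \ref{prop: end of proof} on the pair $(c_1\circ p,\overline{c})$, and then transporting the prepared switches to $\widehat{G}$ via Lemma \ref{lemma:Kempe equivalence preserved under covers}; your verification that the switches on the cycles $\gamma\times\{i\}$ install precisely the $c_2$-pattern of color $d$ (resting on $\delta=0$ and $\overline{c}(e,i)=i$ on $E(G_\text{r})$) agrees with the paper's. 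What your route buys is a proof of the corollary exactly as stated, with the equivalence $(\widehat{G},c_1\circ p\circ q)\sim(\widehat{G},\overline{c}')$ genuinely established on the cover you exhibit; what it costs is that the corollary now assumes $\mathcal{K}_{d-1}$, a hypothesis absent from its statement but available wherever it is used (inside the induction step), and the global bookkeeping is unchanged: the theorem's proof then needs Lemma \ref{prop: end of proof} only once more, and the total covering degree remains $(d-1)\beta(d-1)^2$.
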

\begin{proof}
Let   $p:\overline{G}\to G$ be a finite graph covering and  $\overline{c}$ a proper  coloring of  $\overline{G}$ as constructed in Lemma \ref{lemma: cool cover} above. In particular, the subset of all edges $\overline{e} \in E(\overline{G})$ with $\overline{c}(\overline{e}) = d$ and $c_2\circ p(\overline{e}) \neq d$ or alternatively with $\overline{c}(\overline{e}) \neq d$ and $c_2\circ p(\overline{e}) = d$ is  precisely $p^{-1}(G_\text{r})$, and this is a disjoint union of $\overline{c}$-bi-chromatic cycles. Let $\overline{c}'$ be the coloring of $\overline{G}$ obtained from $\overline{c}$ by performing the corresponding edge Kempe switches, so that $(\overline{G},c_1 \circ p) \sim (\overline{G}, \overline{c}')$ and $(\overline{G},\overline{c}')[d] = (\overline{G},c_2 \circ p)[d]$ as required.
\end{proof}

\begin{proof}[Proof of Theorem \ref{thm:main result}]
We wish to prove that $\mathcal{K}_d$ is true for every natural number $d$. Clearly $\mathcal{K}_1$ and $\mathcal{K}_2$ hold. Assume $\mathcal{K}_{d-1}$ and let $G$ be any finite $d$-regular graph admitting two proper edge colorings $c_1$ and $c_2$. 
According to Corollary \ref{cor:cover aligning one color} there is a finite cover $p : \overline{G} \to G$ and a coloring $\overline{c}'$ of $\overline{G}$ so that $(\overline{G},c_1 \circ p) \sim (\overline{G}, \overline{c}')$ and $(\overline{G},\overline{c}')[d] = (\overline{G},c_2 \circ p)[d]$. 
Relying twice  on   Lemmas \ref{lemma:Kempe equivalence preserved under covers} and \ref{prop: end of proof}  we may pass to  a further finite cover $r : \overline{\overline{G}} \to \overline{G}$ and conclude that
 $$(\overline{\overline{G}}, c_1 \circ p \circ r) \sim (\overline{\overline{G}},\overline{c}' \circ r)\sim (\overline{\overline{G}}, c_2\circ p \circ r).$$
 
To establish  $\mathcal{K}_d$ it remains to verify that the degree of the covering $r : \overline{\overline{G}} \to G$ is bounded from above by a constant $\beta(d)$ depending only on $d$. It is clear that $\beta(1) = \beta(2) = 1$. We claim that $\beta$ satisfies the recursion formula $$\beta(d) = (d-1) \beta(d-1)^2.$$
 The degree of the covering $p$ constructed explicitly  in  Lemma \ref{lemma: cool cover}   is precisely $d-1$. Note that we pass to a further cover twice when relying on  Lemma \ref{prop: end of proof} and   the covering degree increases   by a factor of $\beta(d-1)$ each time. As explained in Remark \ref{rem:about using lemma} no further covers are necessary for the proof. This establishes the claim.
\end{proof}

\subsection*{An example}

Let $G = K_{3,3}$ denote the complete bipartite graph on six vertices. The graph $G$ is $3$-regular. It is known \cite{haas2014counting} that $G$ admits two edge-Kempe inequivalent colorings $c_1$ and $c_2$.  These are   illustrated in the bottom row of Figure \ref{fig:example}. The colors $1,2$ and $3$  correspond to blue, red and black, respectively. 

The required  graph covering $\overline{G}$ and edge-Kempe switches are described in the top row of Figure \ref{fig:example}. These are performed along  the bold cycles and indicated by the $\leadsto$ sign. The value of the function $\kappa : V(G) \to C=\mathbb{Z} / 2\mathbb{Z}=\{{\color{blue}1},{\color{red} 2}=0\}$ is indicated on the vertices of $(G,c_1)$ in the left bottom graph.
%
%
\vspace{25pt}

\begin{center}

\begin{figure}[h]
		\def\svgwidth{\textwidth}
		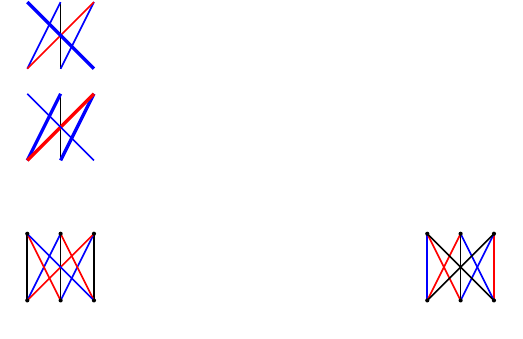
		\caption{A demonstration of the proof of Theorem \ref{thm:main result} for two edge-Kempe inequivalent colorings $c_1$ and $c_2$ of the graph $K_{3,3}$.}

	\label{fig:example}
\end{figure}
\end{center}


%
%


\bibliographystyle{alpha}
\bibliography{recolorings}

\end{document}